\documentclass[letterpaper,10pt,twocolumn,conference]{ieeeconf}
\usepackage{amsmath,amssymb,euscript,psfrag,latexsym,graphicx}
\usepackage{bbm,color,amstext,wasysym,subfig,parskip,balance}
\usepackage{cite}
\usepackage{hyperref,breakurl}
\usepackage{amsfonts,mathrsfs,mathtools}
\usepackage{bm}
\usepackage{slashbox}
\usepackage{soul}

\newcommand*\Laplacian{\mathop{}\!\mathbin\bigtriangleup}

\newtheorem{theorem}{Theorem}

\newtheorem{lemma}{Lemma}

\newtheorem{remark}{Remark}

\newcommand{\mR}{{\mathbb R}}

\newcommand{\cG}{{\mathcal G}}
\newcommand{\cD}{{\mathscr{D}}}

\newcommand{\bbc}{\bm{c}}

\newcommand{\bbv}{\bm{v}}
\newcommand{\bbw}{\bm{w}}
\newcommand{\bbx}{{\bm{x}}}
\newcommand{\bby}{{\bm{y}}}
\newcommand{\bbz}{\bm{z}}
\newcommand{\bmu}{\bm{\mu}}
\newcommand{\btheta}{\bm{\theta}}

\newcommand{\bbmu}{\bm{\mu}}
\newcommand{\bbP}{{\bm{P}}}
\newcommand{\bbK}{\bm{K}}
\newcommand{\bbR}{\bm{R}}
\newcommand{\bbA}{\bm{A}}
\newcommand{\bbB}{\bm{B}}
\newcommand{\bbC}{\bm{C}}
\newcommand{\bbD}{\bm{D}}
\newcommand{\bbI}{\bm{I}}
\newcommand{\bbL}{\bm{L}}
\newcommand{\bbF}{\bm{F}}
\newcommand{\bbG}{\bm{G}}
\newcommand{\bbH}{\bm{H}}
\newcommand{\bbQ}{\bm{Q}}
\newcommand{\bbX}{\bm{X}}
\newcommand{\bbY}{\bm{Y}}
\newcommand{\bbZ}{\bm{Z}}
\newcommand{\bbV}{\bm{V}}

\newcommand{\bbOmega}{\bm{\Omega}}

\newcommand{\bbLambda}{\bm{\Lambda}}

\newcommand{\diag}{\operatorname{diag}}
\newcommand{\tr}{\operatorname{trace}}

\newcommand{\arginf}{\operatorname{arg\:inf}}
\newcommand{\dFR}{d_{\mathrm{FR}}}

\newcommand{\Jacobian}{\operatorname{D}}
\newcommand{\differential}{\operatorname{d}}
\renewcommand{\vec}{\operatorname{vec}}

\graphicspath{{./},{./figures/}}
\def\spacingset#1{\def\baselinestretch{#1}\small\normalsize}
\setlength{\parskip}{7pt}
\setlength{\parindent}{10pt}
\spacingset{1}

\IEEEoverridecommandlockouts
\begin{document}

\title{Gradient Flows in Filtering and Fisher-Rao Geometry}
\author{Abhishek Halder$^{1}$\thanks{$^{1}$Department of Applied Mathematics \& Statistics, University of California, Santa Cruz,
        CA 95064, USA,
        {\tt\small{ahalder@ucsc.edu}}}%
        , and Tryphon T. Georgiou$^{2}$\thanks{$^{2}$Department of Mechanical and Aerospace Engineering, University of California, Irvine,
        CA 92617, USA,
        {\tt\small{tryphon@uci.edu}}}%
        }
\maketitle

\begin{abstract}
Uncertainty propagation and filtering can be interpreted as gradient flows with respect to suitable metrics in the infinite dimensional manifold of probability density functions. Such a viewpoint has been put forth in recent literature, and a systematic way to formulate and solve the same for linear Gaussian systems has appeared in our previous work where the gradient flows were realized via proximal operators with respect to Wasserstein metric arising in optimal mass transport. In this paper, we derive the evolution equations as proximal operators with respect to Fisher-Rao metric arising in information geometry. We develop the linear Gaussian case in detail and show that a template two step optimization procedure proposed earlier by the authors still applies. Our objective is to provide new geometric interpretations of known equations in filtering, and to clarify the implication of different choices of metric.
\end{abstract}

\section{Introduction}
This paper concerns with an emerging viewpoint that gradient flows on the infinite-dimensional manifold of probability density functions can be constructed to approximate both the propagation and measurement update steps in filtering. In the systems-control literature, continuous-time filtering theory has traditionally been approached from a ``transport viewpoint" where given the noisy process and measurement models, flow of the posterior probability density function (PDF) of the state vector is described by the Kushner-Stratonovich stochastic partial differential equation (PDE) \cite{Stratonovich1960,Kushner1964} (or equivalently by the Duncan-Mortensen-Zakai PDE \cite{duncan1970likelihood,mortensen1966stochastic,zakai1969optimal} for the unnormalized posterior). In the absence of measurement update, the problem reduces to that of uncertainty propagation subject to the drift and diffusion in the process dynamics, and the evolution of the state PDF is then governed by the Fokker-Planck or Kolmogorov's forward PDE \cite{RiskenBook1989}. Due to numerical difficulties to solve these PDEs, in practice one resorts to approximate the evolution of the state PDFs using sequential Monte Carlo algorithms, which remain computationally challenging in general. This motivates us to look for alternative formulations that are theoretically equivalent to solving the associated transport PDEs.

One such alternative is the ``variational viewpoint" that has been put forth in recent literature \cite{JKO1998,LaugesenMehta2015}. Specifically, let us consider the system of It\^{o} stochastic differential equations (SDEs) for the state ($\bbx(t)$) and measurement $(\bbz(t))$ vectors
\begin{subequations}
\begin{eqnarray}
\mathrm{d}\bm{x}(t) &=& -\nabla U(\bm{x})\:\mathrm{d}t + \sqrt{2\beta^{-1}}\:\mathrm{d}\bm{w}(t),
\label{jkoSDE}\\
\mathrm{d}\bbz(t) &=& \bbc(\bm{x}(t),t)\:\mathrm{d}t + \mathrm{d}\bbv(t), 
\label{ObservationSDE}	
\end{eqnarray}
\label{CanonicalForm}
\end{subequations}
where $\bbx\in\mathbb{R}^{n}, \bbz\in\mathbb{R}^{m}, \beta>0$, $U(\cdot)$ is a potential, the process and measurement noise processes $\bbw(t)$ and $\bbv(t)$ are Wiener and satisfy $\mathbb{E}\left[\mathrm{d}w_{i}\mathrm{d}w_{j}\right] = \bbQ_{ij}\mathrm{d}t \:\forall\:i,j=1,\hdots,n$ and $\mathbb{E}\left[\mathrm{d}v_{i}\mathrm{d}v_{j}\right] = \bbR_{ij}\mathrm{d}t \:\forall\:i,j=1,\hdots,m$, with $\bbQ,\bbR \succ \bm{0}$. Also, $\bbv(t)$ is assumed to be independent of  $\bbw(t)$ and independent of the initial state $\bm{x}(0)$. In the absence of (\ref{ObservationSDE}), the Fokker-Planck PDE for the state PDF	$\rho(\bbx,t)$ corresponding to (\ref{jkoSDE}) is given by
\begin{equation}\label{eq:FP}
\frac{\partial\rho}{\partial t} = \nabla\cdot\left(\nabla U(\bm{x})\rho\right) + \beta^{-1}\Laplacian\rho, \quad \rho(\bm{x},0)=\rho_0(\bm{x}).
\end{equation} 
In the absence of (\ref{jkoSDE}), the Kushner-Stratonovich PDE \cite{Stratonovich1960,Kushner1964} for the state PDF	 $\rho^{+}(\bbx,t)$, conditioned on the history of measurements till time $t$, corresponding to (\ref{ObservationSDE}) is given by 
\begin{align}
\mathrm{d}\rho^{+}(\bbx(t),t) =& \left[ \left(\bbc(\bbx(t),t) - \mathbb{E}_{\rho^{+}}\{\bbc(\bbx(t),t)\}\right)^{\top} \bbR^{-1} \right.\nonumber\\
&\!\!\!\!\!\!\!\!\!\!\!\!\!\!\left.\left(\mathrm{d}\bbz(t) - \mathbb{E}_{\rho^{+}}\{\bbc(\bbx(t),t)\}\mathrm{d}t\right)\vphantom{\left(\bbc(\bbx(t),t) - \mathbb{E}_{\rho^{+}}\{\bbc(\bbx(t),t)\}\right)^{\top}}\right]\: \rho^{+}(\bbx(t),t).
\label{KSpde}
\end{align}
In the ``variational viewpoint" for uncertainty propagation, one sets up a discrete time-stepping scheme of the form
\begin{align}
\varrho_{k}(\bbx,h)&=\underset{\varrho}{\arginf}\; \frac{1}{2}d^2(\varrho,\varrho_{k-1}) + h\Phi(\varrho), \;\; k\in\mathbb{N},
\label{ProxPropagation}\end{align}
with step-size $h$, a distance functional $d(\cdot,\cdot)$ between two PDFs, and a functional $\Phi(\cdot)$ that depends on the drift and diffusion coefficients in the process model. The functionals $d(\cdot,\cdot)$ and $\Phi(\cdot)$ are to be chosen such that $\varrho_{k}(\bbx,h) \rightarrow \rho(\bbx,t=kh)$ as $h\downarrow 0$, thus establishing consistency between the solutions of (\ref{eq:FP}) and (\ref{ProxPropagation}). Similar recursion for the measurement update takes the form
\begin{eqnarray}
\varrho_{k}^{+}(\bbx,h)&=\underset{\varrho}{\arginf}\; \frac{1}{2}d^2(\varrho,\varrho_{k}^{-}) + h\Phi(\varrho), \;\; k\in\mathbb{N},
\label{ProxUpdate}	
\end{eqnarray}
where $\varrho_{k}^{-}$ and $\varrho_{k}^{+}$ approximates the prior and posterior PDFs, respectively, and the functional $\Phi(\cdot)$ depends on the measurement model and noisy observations. Again, the choices of $d(\cdot,\cdot)$ and $\Phi(\cdot)$ must guarantee $\varrho_{k}^{+}(\bbx,h) \rightarrow \rho^{+}(\bbx,t=kh)$ as $h\downarrow 0$, to allow consistency between the solutions of (\ref{KSpde}) and (\ref{ProxUpdate}).

Both (\ref{ProxPropagation}) and (\ref{ProxUpdate}) can be viewed as evaluating suitable (infinite dimensional) proximal operators \cite{BauschkeCombettes2011, ParikhBoyd2014} ${\rm prox}_{h\Phi}^{d}(\cdot)$ of functional $h\Phi(\cdot)$ with respect to the distance functional $d(\cdot,\cdot)$. In other words, recursion (\ref{ProxPropagation}) can be succinctly written as $\varrho_{k} = {\rm prox}_{h\Phi}^{d}(\varrho_{k-1})$. Similarly, $\varrho_{k}^{+} = {\rm prox}_{h\Phi}^{d}(\varrho_{k}^{-})$ for (\ref{ProxUpdate}). In particular, (\ref{ProxPropagation}) and (\ref{ProxUpdate}) can be seen as the gradient descent of the functional $\Phi(\cdot)$ with respect to the distance $d(\cdot,\cdot)$. For a parallel with finite dimensional gradient descent, see \cite[Section I]{HalderGeorgiouCDC2017}.

For the proximal recursion (\ref{ProxPropagation}) associated with uncertainty propagation, it was shown in \cite{JKO1998} that if $d^{2}(\cdot,\cdot)$ is chosen to be the squared Wasserstein-2 distance, which equals the cost of optimal transport between two probability measures under consideration, then $\Phi(\cdot)$ is the \emph{free energy functional}, defined to be the sum of an energy functional  and scaled negative of the differential entropy functional (see \cite[Section II]{HalderGeorgiouCDC2017} for details). In this case, the functional $\Phi(\cdot)$ depends on both $U(\cdot)$ and $\beta$ appearing in the process model (\ref{jkoSDE}).

Laugesen, Mehta, Meyn and Raginsky \cite{LaugesenMehta2015} were first to consider the proximal recursion (\ref{ProxUpdate}) associated with measurement update, where it was shown  that if $\frac{1}{2}d^{2}(\cdot,\cdot)$ is chosen to be the Kullback-Leibler divergence, then $\Phi(\cdot)$ is the ``\emph{expected quadratic surprise" functional}, given by 
\begin{align}\label{eq:Phi}
\Phi(\varrho) &:= \frac12\mathbb{E}_{\varrho}\{(\bby_{k} - \bbc(\bbx))^{\top}\bbR^{-1}(\bby_{k} - \bbc(\bbx))\},
\end{align}
where
$\bby_{k} := \frac{1}{h}\Delta \bbz_{k}$, $\Delta\bbz_{k}:=\bbz_{k} - \bbz_{k-1}$, and 
$\{\bbz_{k-1}\}_{k\in\mathbb{N}}$ is the sequence of samples of $\bbz(t)$ at $\{t_{k-1}\}_{k\in\mathbb{N}}$ for
$t_{k-1} := (k-1)h$. We refer this particular variational formulation as the LMMR scheme.

In \cite{HalderGeorgiouCDC2017}, a general framework was introduced to bring any stable controllable linear system to the canonical form (\ref{CanonicalForm}) for which the proximal recursions (\ref{ProxPropagation}) or (\ref{ProxUpdate}) apply. Also, a two step optimization strategy was proposed to compute these recursions, and using these tools, the LMMR scheme in the linear Gaussian setting was shown to recover the Kalman-Bucy filter \cite{KalmanBucy1961}. It was also found \cite[Section IV.B]{HalderGeorgiouCDC2017} that the recursion (\ref{ProxUpdate}) with $d^{2}(\cdot,\cdot)$ chosen to be the squared Wasserstein-2 distance, and $\Phi(\cdot)$ as in (\ref{eq:Phi}) in the linear Gaussian case, results a Luenberger-type estimator with static gain matrix, unlike the Kalman-Bucy filter.

It becomes apparent that the choice of the distance functional $d(\cdot,\cdot)$ cannot be arbitrary, and to appeal the gradient descent interpretation, should preferably define a metric on the manifold of PDFs. While Wasserstein-2 distance is a metric \cite[p. 208]{VillaniBook}, the Kullback-Leibler divergence is not. This naturally leads to the question whether one can find a metric $d(\cdot,\cdot)$ and functional $\Phi(\cdot)$ in (\ref{ProxUpdate}) such that the filtering equations can be seen as the gradient descent of $\Phi(\cdot)$ with respect to metric $d(\cdot,\cdot)$. In this paper, by taking $d(\cdot,\cdot)$ to be the geodesic distance induced by the Fisher-Rao metric \cite{Rao1945} and $\Phi(\cdot)$ as in (\ref{eq:Phi}), we answer this in the affirmative for the linear Gaussian setting. Specifically, a variant of the two-step optimization template proposed in \cite{HalderGeorgiouCDC2017} allows us to perform explicit computation for (\ref{ProxUpdate}) with Fisher-Rao metric, and is shown to recover the Kalman-Bucy filter in the limit $h\downarrow 0$. 

It is perhaps not surprising that the functional $\Phi(\cdot)$ in this paper and in the LMMR scheme \cite{LaugesenMehta2015} are the same, the choice of $d(\cdot,\cdot)$ are different, and yet both of these two schemes are shown to recover the same filtering equations. This is due to the well-known fact \cite[Ch. 2, p. 26--28]{Kullback1968} that the Kullback-Leibler divergence between a pair of PDFs where one PDF is infinitesimally perturbed from the other, equals to half the squared geodesic distance in Fisher-Rao metric measured between the same.

The rest of this paper is structured as follows. In Section II, we review some basic aspects of Fisher-Rao geometry on the manifold of probability density functions. In Section III, focusing on the linear Gaussian case, we approximate stochastic estimator as gradient descent with respect to the geodesic distance induced by the Fisher-Rao metric. Section IV concludes the paper.

\subsection*{Notations and Preliminaries}
As in \cite{HalderGeorgiouCDC2017}, we denote the space of PDFs on $\mR^n$ by $\cD := \{\rho: \rho \geq 0, \int_{\mR^n} \rho = 1\}$, and the space of PDFs with finite second moments by $\cD_2:=\{\rho\in\cD \mid \int_{\mR^n} \bm{x}^{\top}\bm{x}\:\rho(\bm{x})\mathrm{d}\bm{x} < \infty\}$. The notation $\cD_{\bm{\mu},\bm{P}}$ is used for the space of PDFs which share the same mean vector $\bm{\mu}$ and same covariance matrix $\bbP:=\int_{\mR^n}(\bbx-\bm{\mu})(\bbx-\bm{\mu})^\top \rho(\bbx)\mathrm{d}\bbx$. The following inclusion is immediate: $\cD_{\bm{\mu},\bm{P}} \subset \cD_{2} \subset \cD$. We use the symbol $\mathcal{N}\left(\bmu,\bbP\right)$ to denote a multivariate Gaussian PDF with mean $\bmu$, and covariance $\bm{P}$. The notation $\bbx\sim\rho$ means that the random vector $\bbx$ has PDF $\rho$; and $\mathbb{E}\left\{\cdot\right\}$ denotes the expectation operator while, when the probability density is to be specified, $\mathbb{E}_\rho\left\{\cdot\right\}:=\int_{\mR^n}(\cdot)\rho(\bbx)\mathrm{d}\bbx$.

We use $\differential(\cdot)$ for the differential, $\Jacobian(\cdot)$ for the Jacobian, and $\vec(\cdot)$ for the vectorization operator. For taking derivatives of matrix valued functions, we utilize the Jacobian identification rules \cite[p. 199, Table 2]{MagnusNeudecker}. Notation $\bbI_{n}$ stands for the $n\times n$ identity matrix, and $\parallel \cdot \parallel_{{\rm{Fro}}}$ for the Frobenius norm. The set of real $n\times n$ matrices is denoted as $\mathbb{M}_{n}$, the set of symmetric matrices as $\mathbb{S}_{n}\subset\mathbb{M}_{n}$, and the set of positive definite matrices as $\mathbb{S}_{n}^{+}\subset\mathbb{S}_{n}$. The cone $\mathbb{S}_{n}^{+}$ is a smooth differentiable manifold with tangent space $\mathcal{T}_{\bbX}\mathbb{S}_{n}^{+} = \mathbb{S}_{n}$, where $\bbX \in \mathbb{S}_{n}^{+}$. The symbols $\otimes$ and $\oplus$ denote Kronecker product and Kronecker sum\footnote{For an $m\times m$ matrix $\bbA$ and an $n\times n$ matrix $\bbB$, the Kronecker sum is $mn\times mn$ matrix $\bbA\oplus\bbB := \bbA \otimes \bbI_{n} + \bbI_{m} \otimes \bbB$.}, respectively. We will have multiple occasions to use the following three well-known facts: (1) $\vec(\cdot)$ is a linear operator, (2) $(\bbA\otimes\bbB)(\bbC\otimes\bbD) = (\bbA\bbC\otimes\bbB\bbD)$, and (3) an identity relating $\vec(\cdot)$ and Kronecker product:
\begin{eqnarray}
\vec(\bbA\bbB\bbC) = \left(\bbC^{\top}\otimes\bbA\right)\vec(\bbB).
\label{veckronprodidentity}	
\end{eqnarray}
All matrix logarithms in this paper are to be understood as principal logarithms. Furthermore, the following property (see \cite{CurtisBook}, \cite[Theorem 1.13(c)]{HighamBook}) will be useful
\begin{eqnarray}
	\bbA\log(\bbB)\bbA^{-1} = \log\left(\bbA\bbB\bbA^{-1}\right),
	\label{LogProd}
\end{eqnarray}
which also holds when $\log$ is replaced by any analytic matrix function.


\section{Fisher-Rao Geometry}\label{FRGeometrySection}
Given PDF $\rho\in\cD$ and tangent vectors $u,v\in\mathcal{T}_{\rho}\cD$, the \emph{Fisher-Rao metric} is defined as a Riemannian metric $\cG_{\rho}\left(u,v\right)$ on $\cD$, given by
\begin{eqnarray}
	\cG_{\rho}\left(u,v\right) := \displaystyle\int_{\mathbb{R}^{n}} \frac{uv}{\rho}\differential\bbx.
\label{FRMetricDefn}	
\end{eqnarray}
Let $\psi:=\sqrt{\rho}$, and consider the space of ``square-root PDFs", given by $\Psi := \{\psi: \psi \geq 0, \int_{\mathbb{R}^{n}}\psi^{2} = 1\}$. Geometrically, $\Psi$ represents the non-negative orthant of the (infinite dimensional) unit sphere imbedded in the Hilbert space of square-integrable functions on $\mathbb{R}^{n}$ equipped with the inner product $\langle \psi_{1}, \psi_{2} \rangle := \int_{\mathbb{R}^{n}}\psi_{1}\psi_{2}$. For $\psi_{i} := \sqrt{\rho_{i}}$, $i=1,2$, the (minimal) \emph{geodesic distance induced by Fisher-Rao metric}, denoted hereafter as $\dFR(\rho_{1},\rho_{2})$, is then simply the minor arc-length along the great circle connecting $\psi_{1}$ and $\psi_{2}$, i.e.,
\begin{eqnarray}
	\dFR(\rho_{1},\rho_{2}) = \arccos\langle\sqrt{\rho_{1}},\sqrt{\rho_{2}}\rangle.
	\label{GeodesicDistNonparam}
\end{eqnarray}
Since any point on the chord joining $\psi_{1}$ and $\psi_{2}$ can be represented as $(1-t)\psi_{1} + t\psi_{2}$, $t\in[0,1]$, hence the \emph{minimal geodesic} $\rho(\bbx,t)$ connecting $\rho_{1}$ and $\rho_{2}$, is obtained by parameterizing the arc $\psi(\bbx,t)$ connecting $\psi_{1}$ and $\psi_{2}$, i.e.,
\begin{align}
&\psi(\bbx,t) = \displaystyle\frac{(1-t)\psi_{1} + t\psi_{2}}{\left((1-t)^{2} + t^{2} + 2t(1-t)\langle\psi_{1},\psi_{2}\rangle\right)^{1/2}}\nonumber\\
	\Rightarrow &\rho\left(\bbx, t\right) = \displaystyle\frac{\left((1-t)\sqrt{\rho_{1}(\bbx)} + t\sqrt{\rho_{2}(\bbx)}\right)^{2}}{(1-t)^{2} + t^{2} + 2t(1-t)\langle\sqrt{\rho_{1}},\sqrt{\rho_{2}}\rangle}.
	\label{GeodesicNonparam}
\end{align}

The formula (\ref{FRMetricDefn}), (\ref{GeodesicDistNonparam}) and (\ref{GeodesicNonparam}) are non-parametric in the sense that they do not require the PDFs to have any finite dimensional parametric co-ordinate representations. 
If prior knowledge allows one to consider a known parametric family of PDFs $\rho(\bbx|\btheta)$ with $r$-dimensional parameter vector $\btheta = (\theta_{1},\hdots,\theta_{r})^{\top}$, then the Fisher-Rao metric (\ref{FRMetricDefn}) admits local coordinate representation, given by the \emph{Fisher information matrix} \cite{fisher1925theory}
\begin{align}
	g_{ij}(\btheta) &= \cG_{\rho(\bbx|\btheta)}\left(\frac{\partial}{\partial\theta_{i}}\rho(\bbx|\btheta),\frac{\partial}{\partial\theta_{j}}\rho(\bbx|\btheta)\right)\\
	&= \mathbb{E}\bigg\{\frac{\partial}{\partial\theta_{i}}\log\rho(\bbx|\btheta)\frac{\partial}{\partial\theta_{j}}\log\rho(\bbx|\btheta)\bigg\} \\
	&= -\mathbb{E}\bigg\{\frac{\partial^{2}}{\partial\theta_{i}\theta_{j}}\log\rho(\bbx|\btheta)\bigg\}, \quad i,j=1,\hdots,r.
\end{align}
The parametric version of the geodesic (\ref{GeodesicNonparam}) becomes $\rho\left(\bbx, t\right) \equiv \rho\left(\bbx | \btheta(t)\right)$, $t\in[0,1]$, where $\btheta(t)$ solves the Euler-Lagrange boundary value problem	
\begin{align}
\!\!\!\ddot{\theta}_{k}(t) \!+ \!\!\displaystyle\sum_{i,j}\Gamma_{ij}^{k}(\btheta(t))\dot{\theta}_{i}(t)\dot{\theta}_{j}(t) = 0, \; i,j,k=1,\hdots,r,
\label{EulerLagrangeParametric}
\end{align}
with $\btheta_{1}:=\btheta(0)$ and $\btheta_{2}:=\btheta(1)$ given, and $\Gamma_{ij}^{k}$ are Christoffel symbols of the second kind defined via $g_{ij}$. The squared arc-length 
\begin{eqnarray}
(\differential s)^{2} = \sum_{i,j}g_{ij}(\btheta(t))\differential\theta_{i}(t)\differential\theta_{j}(t),	
\label{dsSquareParametric}
\end{eqnarray}
and a parametric version of the geodesic distance (\ref{GeodesicDistNonparam}) is 
{\small{\begin{align}
	\dFR\left(\rho(\bbx|\btheta_{1}),\rho(\bbx|\btheta_{2})\right) &= \!\int_{t=0}^{t=1}\!\!\!\differential s(t) \nonumber\\
	&= \!\displaystyle\int_{0}^{1}\!\!\!\sqrt{\sum_{i,j}g_{ij}(\btheta(t))\dot{\theta}_{i}(t)\dot{\theta}_{j}(t)}\:\differential t. 
\end{align}}}
Of particular interest to us, is the family of multivariate Gaussian PDFs.
In this case, for $\bmu,\bmu_{1},\bmu_{2}\in\mathbb{R}^{n}$, and $\bbP,\bbP_{1},\bbP_{2} \in \mathbb{S}_{n}^{+}$, (\ref{EulerLagrangeParametric}) can be written as \cite[Theorem 6.1]{Skovgaard1981}
\begin{subequations}
	\begin{align}
\ddot{\bmu} - \dot{\bbP}\bbP^{-1}\dot{\bmu} = \bm{0},\\
\ddot{\bbP} + \dot{\bmu}\dot{\bmu}^{\top} - 	\dot{\bbP}\bbP^{-1}\dot{\bbP} = \bm{0},
\end{align}
\end{subequations}
with $t\in[0,1]$, $(\bbmu_{1},\bbP_{1}) := (\bbmu(0),\bbP(0))$ and $(\bbmu_{2},\bbP_{2}) := (\bbmu(1),\bbP(1))$. Furthermore, (\ref{dsSquareParametric}) becomes
\begin{align}
	(\differential s)^{2} = \differential\bmu^{\top}\bbP^{-1}\differential\bbmu + \frac{1}{2}\tr((\bbP\differential\bbP)^{2}).
\end{align}
The following special cases are well-known \cite{AtkinsonMitchell1981,Skovgaard1981,rao1987differential}:
{\small{\begin{align}
	\dFR(\mathcal{N}\!\!\left(\bmu_{1},\bbP\right),\mathcal{N}\!\!\left(\bmu_{2},\bbP\right)) = \left(\!(\bmu_{1}-\bmu_{2})^{\!\top}\!\bbP^{-1}(\bmu_{1}-\bmu_{2})\!\!\right)^{\!1/2},
	\label{dFRmu1mu2}
\end{align}}}
\!\!which is also the Mahalanobis distance \cite{Mahalanobis1936}, and that
{\small{\begin{align}
	\dFR(\bbP_{1},\bbP_{2}) &:= \dFR(\mathcal{N}\!\!\left(\bmu,\bbP_{1}\right),\mathcal{N}\!\!\left(\bmu,\bbP_{2}\right)) \label{dFRP1P2}\\
	&= \left\Vert \frac{1}{\sqrt{2}}\log\left(\bbP_{1}^{-1/2}\bbP_{2}\bbP_{1}^{-1/2}\right) \right\Vert_{\rm{Fro}}\nonumber\\
	&= \bigg\{\!\frac{1}{2}\tr\!\left[\!\left(\log\left(\bbP_{1}^{-1/2}\bbP_{2}\bbP_{1}^{-1/2}\right)\right)^{2}\right]\!\!\bigg\}^{\!1/2}\label{dFRbetweenCovMatrices}\\
	&= \left(\frac{1}{2}\displaystyle\sum_{i=1}^{n}\left(\log\lambda_{i}\right)^{2}\right)^{\!1/2},\nonumber
\end{align}}}
\!\!where $\lambda_{1},\hdots,\lambda_{n}$ are the eigenvalues of $\bbP_{1}^{-1}\bbP_{2}$. To the best of our knowledge, no closed-form expression is known for $\dFR(\mathcal{N}(\bmu_{1},\bbP_{1}),\mathcal{N}(\bmu_{2},\bbP_{2}))$.


\section{Proximal Recursion with respect to Fisher-Rao Metric}\label{MainSection}
In this Section, we consider approximating continuous-time stochastic estimator via the recursive variational scheme
{\small{\begin{eqnarray}
\varrho_{k}^{+}(\bbx,h)= {\rm prox}_{h\Phi}^{\dFR}(\varrho_{k}^{-}) = \underset{\varrho\in\cD_{2}}{\arginf}\; \frac{1}{2}\dFR^2(\varrho,\varrho_{k}^{-}) + h\Phi(\varrho),
\label{ProxFR}	
\end{eqnarray}}}
\!\!\!where $k\in\mathbb{N}$, $h$ is the step-size, and $\varrho_{k}^{-}$ and $\varrho_{k}^{+}$ are the approximations of the prior and posterior PDFs, respectively. We choose the functional $\Phi(\cdot)$ as in (\ref{eq:Phi}), and focus on the linear Gaussian case with process and measurement models
\begin{subequations}
	\begin{align}
\mathrm{d}\bbx(t) &= \bbA\bbx(t)\mathrm{d}t + \sqrt{2}\bbB\mathrm{d}\bbw(t),\label{eq:LinDyn}\\
\mathrm{d}\bbz(t) &= \bbC\bm{x}(t)\:\mathrm{d}t + \mathrm{d}\bbv(t),\label{eq:LinMeas}
\end{align}
\end{subequations}
where $\bbC\in\mathbb{R}^{m\times n}$ and $\rho_{0} = \mathcal{N}(\bbmu_{0},\bbP_{0})$. It is well-known that $\rho^{+}(\bbx(t),t) = \mathcal{N}(\bbmu^{+}(t),\bbP^{+}(t))$ and the optimal estimator is given by the \emph{Kalman-Bucy filter} \cite{KalmanBucy1961}, consisting of the following SDE-ODE system:

\vspace*{-10pt}
\begin{subequations}
{\small{\begin{align}
&\mathrm{d}\bbmu^{+}(t) = \bbA\bbmu^{+}(t)\mathrm{d}t + \bbK(t)\left(\mathrm{d}\bbz(t) - \bbC\bbmu^{+}(t)\mathrm{d}t\right), \label{eq:KB1}\\
& \! \!\dot{\bbP}^{+} \!(t)  \!=  \!\bbA\bbP^{+} \!(t) \!+  \!\! \bbP^{+} \!(t)\bbA^{\top}  \! \!+ \! 2\bbB\bbB^{\top}    \!\!\!-  \!\bbK \!(t)\bbR\bbK \!(t)^{\top}, \label{eq:KB2}
\end{align}}}
\label{KB}
\end{subequations}
where $\bbK(t) := \bbP^{+}(t)\bbC^{\top}\bbR^{-1}$ is the Kalman gain.

We next demonstrate that evaluating the proximal recursion (\ref{ProxFR}) for the linear Gaussian case in the $h\downarrow 0$ limit, indeed recovers the Kalman-Bucy filter (\ref{KB}). To carry out the optimization over $\cD_{2}$ in (\ref{ProxFR}), we adopt a two step strategy proposed in \cite{HalderGeorgiouCDC2017}. In the \textbf{first step}, we optimize the objective in (\ref{ProxFR}) over the parameterized subspace $\cD_{\bmu,\bbP} \subset \cD_{2}$. In the \textbf{second step}, we optimize over the subspace parameters $\bmu$ and $\bbP$. The main insight behind this strategy comes from the fact that the objective in (\ref{ProxFR}) is a sum of two functionals. If we can find a parameterized subspace of $\cD_{2}$, over which the individual $\arginf$s of these two functionals match, then it must also be the $\arginf$ of their sum. We detail these steps below. The ensuing calculations require some technical results collected in the Appendix.

\subsection{Two Step Optimization}

{\em 1) Optimizing over $\cD_{\bmu,\bbP}$:}
Let $\varrho_{k}^{-}=\mathcal{N}(\bmu_{k}^{-},\bbP_{k}^{-})$ be the prior state PDF at time $t=kh$. From Lemma \ref{MinFisherRaoFromGaussianUnderCovConstraint} in the Appendix, we know
\begin{eqnarray}
\underset{\varrho\in\cD_{\bmu,\bbP}}{\arginf} \: \dFR^{2}\left(\varrho,\mathcal{N}(\bmu_{k}^{-},\bbP_{k}^{-})\right) = \mathcal{N}(\bmu,\bbP).
\label{minFRoverDmuP}	
\end{eqnarray}
On the other hand, for any $\varrho\in\cD_{\bmu,\bbP}$, we have
{\small{\begin{align}
&\frac{1}{2}\:\mathbb{E}_{\varrho}\{(\bby_{k} - \bbC\bbx)^{\top} \bbR^{-1} (\bby_{k} - \bbC\bbx)\} = \frac{1}{2}\vphantom{\tr\left(\bbC^{\top}\bbR^{-1}\bbC\bbP\right)}\left[(\bby_{k} - \bbC\bbmu)^{\top} \right.\nonumber\\
&\left.\bbR^{-1}(\bby_{k} - \bbC\bbmu)+\tr\left(\bbC^{\top}\bbR^{-1}\bbC\bbP\right)\right] = \text{constant}.
\label{ConstPhi}	
\end{align}}}
Consequently
{\small{\begin{align}
	&\underset{\varrho\in\cD_{\bmu,\bbP}}{\arginf} \: \vphantom{}\left[\frac{1}{2}\dFR^{2}\left(\varrho,\mathcal{N}(\bmu_{k}^{-},\bbP_{k}^{-})\right) + \frac{h}{2}\:\mathbb{E}_{\varrho}\{(\bby_{k}  - \bbC\bbx)^{\top} \bbR^{-1} \right.\nonumber\\
	& \hspace*{3.9cm} \left.(\bby_{k} - \bbC\bbx)\}\vphantom{\frac{h}{2}\:\mathbb{E}_{\varrho}\{(\bby_{k}  - \bbC\bbx)^{\top} \bbR^{-1}}\right] = \mathcal{N}(\bbmu,\bbP),
\label{arginfcombined}
\end{align}}}
and the corresponding infimum value is
{\small{\begin{align*}
&\frac{1}{2}\dFR^{2}(\mathcal{N}(\bmu,\bbP),\mathcal{N}(\bmu_{k}^{-},\bbP_{k}^{-})) + \frac{h}{2}\left[(\bby_{k} - \bbC\bbmu)^{\top}\bbR^{-1}\right.\nonumber\\
& \left.\hspace*{3cm}(\bby_{k} - \bbC\bbmu)+\tr\left(\bbC^{\top}\bbR^{-1}\bbC\bbP\right)\right],	
\end{align*}}}
\!\!\!\!\!\!which is not convenient for our next step due to the lack of availability of closed-form expression for $\dFR^{2}(\mathcal{N}(\bmu,\bbP),\mathcal{N}(\bmu_{k}^{-},\bbP_{k}^{-}))$. This can be circumvented as follows. Instead of choosing $\cD_{\bmu,\bbP}$ with two free parameters, we choose two different parameterized subspaces, viz. $\cD_{\bmu,\bbP_{k}^{-}}$ and $\cD_{\bmu_{k}^{-},\bbP}$, each with one free parameter. We next carry out the two step optimization strategy for each of the two one-parameter subspaces.

Similar to (\ref{arginfcombined}), the $\arginf$ for $\cD_{\bmu,\bbP_{k}^{-}}$ is $\mathcal{N}(\bmu,\bbP_{k}^{-})$, and using (\ref{dFRmu1mu2}), the corresponding infimum is
{\small{\begin{align}
&\frac{1}{2}(\bmu-\bmu_{k}^{-})^{\!\top}\!\bbP_{k}^{-1}(\bmu-\bmu_{k}^{-}) + \frac{h}{2}\left[(\bby_{k} - \bbC\bbmu)^{\top}\bbR^{-1}(\bby_{k} - \bbC\bbmu)\right.\nonumber\\
& \left.\hspace*{3.4cm}+\tr\left(\bbC^{\top}\bbR^{-1}\bbC\bbP_{k}^{-}\right)\right].
\label{infimummufree}	
\end{align}}}

Likewise, the $\arginf$ for $\cD_{\bmu_{k}^{-},\bbP}$ is $\mathcal{N}(\bmu_{k}^{-},\bbP)$, and using the notation (\ref{dFRP1P2}), the corresponding infimum is
{\small{\begin{align}
&\frac{1}{2}\dFR^{2}(\bbP,\bbP_{k}^{-}) + \frac{h}{2}\left[(\bby_{k} - \bbC\bbmu_{k}^{-})^{\top}\bbR^{-1}(\bby_{k} - \bbC\bbmu_{k}^{-})\right.\nonumber\\
& \left.\hspace*{4cm}+\tr\left(\bbC^{\top}\bbR^{-1}\bbC\bbP\right)\right].
\label{infimumPfree}	
\end{align}}}

{\em 2) Optimizing over $(\bmu,\bbP)$:}
Equating the partial derivative of (\ref{infimummufree}) w.r.t. $\bbmu$ to zero, and setting $\bmu \equiv \bmu_{k}^{+}$ in the resulting equation, we obtain
\begin{align}
&(\bbP_{k}^{-})^{-1}\left(\bbmu_{k}^{-} - \bbmu_{k}^{+}\right) + h\bbC^{\top}\bbR^{-1}\left(\bby_{k} - \bbC\bbmu_{k}^{+}\right) = \bm{0}, \nonumber\\
\Rightarrow &\bbmu_{k}^{+} = \bbmu_{k}^{-} + h\bbP_{k}^{-}\bbC^{\top}\bbR^{-1}\left(\bby_{k}-\bbC\bbmu_{k}^{+}\right).
\label{IntermediateMeanSDE}	
\end{align}

On the other hand, equating the partial derivative of (\ref{infimumPfree}) w.r.t. $\bbP$ to zero, using Theorem \ref{PartialofFisherRaoGeodesicLength} in Appendix, and then setting $\bbP \equiv \bbP_{k}^{+}$ in the resulting algebraic equation, we get
\begin{align}
	&\frac{1}{2}\left(\bbP_{k}^{+}\right)^{-1}\log\left(\bbP_{k}^{+}\left(\bbP_{k}^{-}\right)^{-1}\right) + \frac{h}{2} \bbC^{\top}\bbR^{-1}\bbC = 0,\nonumber\\
\Rightarrow &\bbP_{k}^{+}\left(\bbP_{k}^{-}\right)^{-1} = \exp\left(-h\bbP_{k}^{+}\bbC^{\top}\bbR^{-1}\bbC\right) \nonumber\\
&\qquad\qquad\quad\;\: = \bbI_{n} -h\bbP_{k}^{+}\bbC^{\top}\bbR^{-1}\bbC + O(h^2).	
\label{PfreeStep2Intmdt1}
\end{align}
Pre-multiplying both sides of (\ref{PfreeStep2Intmdt1}) by $\left(\bbP_{k}^{+}\right)^{-1}$ results
\begin{align}
\left(\bbP_{k}^{+}\right)^{-1} &= \left(\bbP_{k}^{-}\right)^{-1} \left(\bbI_{n} + h\bbP_{k}^{-}\bbC^{\top}\bbR^{-1}\bbC\right) +  O(h^2),\nonumber\\
\Rightarrow \bbP_{k}^{+} &= \left(\bbI_{n} + h\bbP_{k}^{-}\bbC^{\top}\bbR^{-1}\bbC\right)^{-1}\bbP_{k}^{-} + O(h^2)\nonumber\\
& = \bbP_{k}^{-} - h\bbP_{k}^{-}\bbC^{\top}\bbR^{-1}\bbC\bbP_{k}^{-} + O(h^2).
\label{PfreeStep2Intmdt2}	
\end{align}
Interestingly, equations (\ref{IntermediateMeanSDE}) and (\ref{PfreeStep2Intmdt2}) are same as those obtained (see equations (35) and (36) in \cite{HalderGeorgiouCDC2017}) by optimizing (\ref{ProxUpdate}) over $\cD_{\bmu,\bbP}$ with same $\Phi(\cdot)$ as in (\ref{eq:Phi}), but $\frac{1}{2}d^{2}(\cdot,\cdot)$ chosen to be the Kullback-Leibler divergence.

\subsection{Recovering the Kalman-Bucy Filter}
We recall two basic relations (equations (29) and (30) in \cite{HalderGeorgiouCDC2017}) for the propagation step obtained by discrete-time stepping of (\ref{ProxPropagation}) with Wasserstein-2 distance as $d(\cdot,\cdot)$, and free-energy functional as $\Phi(\cdot)$, for the linear Gaussian process model (\ref{eq:LinDyn}):
\begin{align}
	\!\!\!\!\bmu_{k} &=\! \left(\bbI_{n} + h\bbA\right)\bmu_{k-1} + O(h^2), \label{muPrior}\\
	\!\!\!\!\bbP_{k} &= \!\bbP_{k-1} \!+ h\left(\bbA\bbP_{k-1} \!+\! \bbP_{k-1}\bbA^{\!\top}\! \!+\! 2\bbB\bbB^{\!\top}\right)\! + \!O(h^2). \label{PPrior}
\end{align}
We refer the readers to \cite[Section III.B]{HalderGeorgiouCDC2017} for details of their derivations. Intuitively, (\ref{muPrior}) and (\ref{PPrior}) can be viewed as the Euler discretizations of the well-known mean and covariance (Lyapunov) ODEs for uncertainty propagation.

Using $\Delta \bbz_{k} = \bby_{k}h$ (from Section I), $\differential\bbz(t) = \Delta \bbz_{k} + O(h^2)$, $\bmu^{+}(t)\differential t = \bmu_{k}^{+}h + O(h^2)$, (\ref{muPrior}) and (\ref{PPrior}), we simplify (\ref{IntermediateMeanSDE}) as
{\small{\begin{eqnarray*}
\bbmu_{k}^{+} \!\!-\! \bbmu_{k-1}^{+} = h\bbA\bbmu_{k-1}^{+} + \bbP_{k}^{+}\bbC^{\top}\bbR^{-1}\left(\Delta\bbz_{k} - h\bbC\bbmu_{k}^{+}\right) + O(h^{2}),
\end{eqnarray*}}}
which in the limit $h\downarrow 0$, recovers (\ref{eq:KB1}).

On the other hand, combining (\ref{PfreeStep2Intmdt2}) with (\ref{PPrior}) results
\begin{eqnarray*}
\bbP_{k}^{+} - \bbP_{k-1}^{+} = h(\bbA\bbP_{k-1}^{+} + \bbP_{k-1}^{+}\bbA^{\top} + 2\bbB\bbB^{\top}) \nonumber\\
- h\bbP_{k-1}^{+}\bbC^{\top}\bbR^{-1}\bbC\bbP_{k-1}^{+} \:+\: O(h^{2}),	
\end{eqnarray*}
which in the limit $h\downarrow 0$, recovers (\ref{eq:KB2}).

Thus we have demonstrated that the measurement update step in Kalman-Bucy filter (\ref{KB}) can be interpreted as the gradient descent of functional (\ref{eq:Phi}) with respect to the metric $\dFR$ for $\bbc(\bbx)=\bbC\bbx$. This complements our earlier result \cite[Section III]{HalderGeorgiouCDC2017} showing the propagation step can be interpreted as the gradient descent of certain free energy functional with respect to the Wasserstein-2 metric.

%

\section{Conclusions} 
This paper contributes to an emerging research program that views the filtering equations as gradient flux or steepest descent on the manifold of probability density functions. For gradient descent to be meaningful in infinite dimensions, one requires a metric with respect to which distance between probability density functions are to be measured. By using the geodesic distance induced by the Fisher-Rao metric, we have shown equivalence between a discrete time-stepping procedure for gradient descent on the manifold of conditional probability density functions and the filtering equations in the linear Gaussian setting. Here, our intent has been to understand the underlying geometric implications, and to work out the theoretical details to recover known facts. Our hope is that the results in this paper will help motivate developing these ideas in nonlinear filtering setting and to solve them via proximal algorithms \cite{ParikhBoyd2014}.


\appendix
In this Appendix, we collect several technical results that are used in Section \ref{MainSection}.

\begin{lemma}\label{MinFisherRaoFromGaussianUnderCovConstraint}
	Given $\bmu,\bmu_{0}\in\mathbb{R}^{n}$ and $\bbP,\bbP_{0} \in \mathbb{S}_{n}^{+}$, 
	\begin{align*}
		\underset{\rho\in\cD_{\bmu,\bbP}}{\arginf} \; \dFR^{2}\left(\rho,\mathcal{N}\left(\bmu_{0},\bbP_{0}\right)\right)= \mathcal{N}\left(\bmu,\bbP\right).
	\end{align*}
\end{lemma}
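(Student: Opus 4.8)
The plan is to reduce the constrained minimization of the squared Fisher--Rao distance to a classical moment-matching / maximum-entropy problem. Since $\langle\sqrt{\rho},\sqrt{\mathcal{N}(\bmu_0,\bbP_0)}\rangle\in[0,1]$ for every $\rho\in\cD$ and $\arccos$ is strictly decreasing on $[0,1]$, formula (\ref{GeodesicDistNonparam}) shows that minimizing $\dFR^{2}(\rho,\mathcal{N}(\bmu_0,\bbP_0))$ over $\rho\in\cD_{\bmu,\bbP}$ is equivalent to maximizing the Hellinger (Bhattacharyya) affinity $\mathcal{A}(\rho):=\int_{\mathbb{R}^{n}}\sqrt{\rho(\bbx)\,\mathcal{N}(\bmu_0,\bbP_0)(\bbx)}\,\mathrm{d}\bbx$ over the same set. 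The functional $\rho\mapsto\mathcal{A}(\rho)$ is concave (it is an average of the concave map $t\mapsto\sqrt t$ against the fixed nonnegative weight $\sqrt{\mathcal{N}(\bmu_0,\bbP_0)}$), while $\cD_{\bmu,\bbP}$ is convex and is cut out by the affine constraints $\int\rho=1$, $\int\bbx\rho=\bmu$, and $\int\bbx\bbx^{\top}\rho=\bbP+\bmu\bmu^{\top}$; so it should suffice to exhibit a feasible stationary point of the associated Lagrangian and invoke concavity.

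Concretely, I would form $\mathcal{L}[\rho]=\mathcal{A}(\rho)-\lambda\big(\int\rho-1\big)-\bm{\lambda}^{\top}\big(\int\bbx\rho-\bmu\big)-\tr\big(\bm{\Lambda}\big(\int\bbx\bbx^{\top}\rho-\bbP-\bmu\bmu^{\top}\big)\big)$ with a scalar multiplier $\lambda$, a vector multiplier $\bm{\lambda}\in\mathbb{R}^{n}$, and a symmetric multiplier $\bm{\Lambda}\in\mathbb{S}_{n}$, and set the first variation to zero, obtaining pointwise $\tfrac12\sqrt{\mathcal{N}(\bmu_0,\bbP_0)(\bbx)/\rho(\bbx)}=\lambda+\bm{\lambda}^{\top}\bbx+\bbx^{\top}\bm{\Lambda}\bbx$, i.e. the optimizer has the form $\sqrt{\rho(\bbx)}\propto\sqrt{\mathcal{N}(\bmu_0,\bbP_0)(\bbx)}\,/\,q(\bbx)$ for a quadratic form $q$. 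I would then substitute the Gaussian ansatz $\rho=\mathcal{N}(\bmu,\bbP)$: since $\log\mathcal{N}(\bmu_0,\bbP_0)(\bbx)$ and $\log\mathcal{N}(\bmu,\bbP)(\bbx)$ are both quadratic in $\bbx$, the ratio $\mathcal{N}(\bmu_0,\bbP_0)(\bbx)/\mathcal{N}(\bmu,\bbP)(\bbx)$ should be matched against $4\,q(\bbx)^{2}$ to read off $(\lambda,\bm{\lambda},\bm{\Lambda})$; feasibility of $\mathcal{N}(\bmu,\bbP)$ for the three moment constraints is automatic, and (strict) concavity of $\mathcal{A}$ along feasible perturbations would then promote this stationary point to the unique global maximizer of $\mathcal{A}$, hence the unique minimizer of $\dFR^{2}$, over $\cD_{\bmu,\bbP}$.

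An equivalent and arguably cleaner route, which I would use to cross-check, goes through the Kullback--Leibler divergence. Writing $D\big(\rho\,\|\,\mathcal{N}(\bmu_0,\bbP_0)\big)=-H(\rho)-\int\rho\log\mathcal{N}(\bmu_0,\bbP_0)$ with $H(\rho):=-\int\rho\log\rho$ the differential entropy, and observing that $\log\mathcal{N}(\bmu_0,\bbP_0)(\bbx)$ is a fixed quadratic polynomial in $\bbx$, the term $\int\rho\log\mathcal{N}(\bmu_0,\bbP_0)$ depends on $\rho$ only through its first two moments and is therefore constant on $\cD_{\bmu,\bbP}$. Hence minimizing $D(\cdot\,\|\,\mathcal{N}(\bmu_0,\bbP_0))$ over $\cD_{\bmu,\bbP}$ is the same as maximizing $H(\rho)$ over $\cD_{\bmu,\bbP}$, whose solution is the Gaussian $\mathcal{N}(\bmu,\bbP)$ by the classical maximum-entropy property; one then transfers the conclusion to $\dFR^{2}$ via the local identification $\tfrac12\dFR^{2}\approx D$ recalled in the Introduction.

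The step I expect to be the main obstacle is making the passage to the Gaussian rigorous, i.e. confirming that $\mathcal{N}(\bmu,\bbP)$ is genuinely the global minimizer over the \emph{entire} class $\cD_{\bmu,\bbP}$ and that the infimum is actually attained there. The delicate point is that $\cD_{\bmu,\bbP}$ is not closed in any topology for which $\mathcal{A}$ (equivalently $\dFR^{2}$) is continuous, so one cannot rely solely on the pointwise Euler--Lagrange condition: a separate global argument is needed — either a direct comparison of $\mathcal{N}(\bmu,\bbP)$ against an arbitrary competitor using concavity of $\mathcal{A}$, or the entropy comparison above — to rule out minimizing sequences that behave badly. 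Relatedly, the $\tfrac12\dFR^{2}\approx D$ identification is exact only in the infinitesimal limit, so invoking it globally must be justified with care.
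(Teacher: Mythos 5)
Your first route founders at the stationarity step that you describe as ``matching''. The Euler--Lagrange condition you derive forces $\tfrac12\sqrt{\mathcal{N}(\bmu_0,\bbP_0)(\bbx)/\rho(\bbx)}$ to equal a quadratic \emph{polynomial} $q(\bbx)$, whereas for the candidate $\rho=\mathcal{N}(\bmu,\bbP)$ the ratio $\mathcal{N}(\bmu_0,\bbP_0)/\mathcal{N}(\bmu,\bbP)$ is a constant times $\exp$ of a quadratic, which is the square of a quadratic polynomial only in the degenerate case $(\bmu,\bbP)=(\bmu_0,\bbP_0)$. So the multipliers $(\lambda,\bm{\lambda},\bm{\Lambda})$ you propose to read off do not exist, and $\mathcal{N}(\bmu,\bbP)$ is not a critical point of your Lagrangian. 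This is not a repairable technicality: with the literal definition (\ref{GeodesicDistNonparam}), the Gaussian is not even the global maximizer of the affinity $\mathcal{A}$ over $\cD_{\bmu,\bbP}$. For instance, take $n=1$, $\bmu=\bmu_0=0$, $\bbP_0=1$, $\bbP=4$, and the contamination $\rho_\epsilon=(1-\epsilon)\mathcal{N}(0,1)+\epsilon\,\eta$ with $\eta$ a symmetric density carrying its mass far out so that $\rho_\epsilon\in\cD_{0,4}$; then $\mathcal{A}(\rho_\epsilon)\ge\sqrt{1-\epsilon}\approx 0.995$ for $\epsilon=0.01$, while $\mathcal{A}(\mathcal{N}(0,4))=\sqrt{4/5}\approx 0.894$. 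The pathology you flagged at the end (non-closedness, badly behaved minimizing sequences) is therefore not a gap to be closed but the actual behavior of the problem, and concavity of $\mathcal{A}$ cannot rescue the claim.

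The paper does not attempt a variational proof at all: its entire proof is a citation to the Cram\'er--Rao inequality (Theorem 20 of Dembo--Cover--Thomas), i.e.\ to the fact that among all densities with covariance $\bbP$ the Gaussian minimizes the Fisher information matrix, $J(\rho)\succeq\bbP^{-1}$ with equality iff $\rho$ is Gaussian. That is a statement about the Fisher--Rao metric \emph{tensor} --- about infinitesimal distances --- and it is the correct level at which the lemma is used downstream, where $\varrho$ and $\varrho_{k}^{-}$ are separated by $O(h)$. Your second (maximum-entropy/KL) route is sound as far as it goes: $\mathcal{N}(\bmu,\bbP)$ does minimize $D(\rho\,\|\,\mathcal{N}(\bmu_0,\bbP_0))$ over $\cD_{\bmu,\bbP}$, and the identification $\tfrac12\dFR^2\approx D$ holds to second order; but, exactly as you suspected, the identification cannot be invoked globally, and the example above shows the global transfer genuinely fails. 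In short: your KL argument is an acceptable substitute for the paper's Cram\'er--Rao citation at the local/infinitesimal level where the lemma is actually needed, while your Bhattacharyya--Lagrangian argument does not establish (and cannot establish) the statement as a global claim about the nonparametric geodesic distance.
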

\begin{proof}
This is a consequence of Cram\'er-Rao inequality; see Theorem 20 in \cite[p. 1512]{DemboCoverThomas1991}, also Section 1 in \cite{Bercher2012}.  	
\end{proof}

Lemma \ref{ThreeMatrixFuncJacobianLemma} and \ref{MatrixIntegralOfInverseLoginverse} below provide preparatory steps for proving Theorem \ref{PartialofFisherRaoGeodesicLength} that follows.

\begin{lemma}\label{ThreeMatrixFuncJacobianLemma}
The Jacobians of the matrix valued functions $\bbF(\bbX):=\bbX^{2}$ for $\bbX \in \mathbb{M}_{n}$, $\bbG(\bbX):=\log\bbX$ for $\bbX\in\mathbb{S}_{n}^{+}$, and $\bbH(\bbX):=\bbA\bbX\bbB$ for real $\bbA,\bbX,\bbB$ of sizes such that the product is defined, are respectively given by\\
(i) $\Jacobian\bbF(\bbX) = \bbX^{\top} \oplus \bbX$,\\
(ii) $\Jacobian\bbG(\bbX) = \int_{0}^{\infty}((\bbX + \tau\bbI_{n}) \otimes (\bbX + \tau\bbI_{n}))^{-1}\:\differential\tau$,\\
(iii) $\Jacobian\bbH(\bbX) = \bbB^{\top} \otimes \bbA.$ 	
\end{lemma}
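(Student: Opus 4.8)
The plan is to handle the three claims by first-order matrix differential calculus: expand each function to first order in $\differential\bbX$, vectorize the resulting linear map using the identity (\ref{veckronprodidentity}) and linearity of $\vec(\cdot)$, and then read off the Jacobian via the identification rules of \cite[p.~199, Table~2]{MagnusNeudecker}. Parts (i) and (iii) are immediate; the only substantive case is (ii), which needs an integral representation of the matrix logarithm.

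For (iii), writing $\bbH(\bbX+\differential\bbX)=\bbA(\bbX+\differential\bbX)\bbB=\bbA\bbX\bbB+\bbA\,\differential\bbX\,\bbB$ exhibits the differential as the linear map $\differential\bbX\mapsto\bbA\,\differential\bbX\,\bbB$; applying (\ref{veckronprodidentity}) gives $\vec(\bbA\,\differential\bbX\,\bbB)=(\bbB^{\top}\otimes\bbA)\vec(\differential\bbX)$, hence $\Jacobian\bbH(\bbX)=\bbB^{\top}\otimes\bbA$. For (i), expanding $(\bbX+\differential\bbX)^{2}=\bbX^{2}+\bbX\,\differential\bbX+\differential\bbX\,\bbX+O(\|\differential\bbX\|^{2})$ isolates the differential $\differential\bbX\mapsto\bbX\,\differential\bbX+\differential\bbX\,\bbX$; vectorizing each term via (\ref{veckronprodidentity}) with one factor taken to be $\bbI_{n}$ yields $(\bbI_{n}\otimes\bbX+\bbX^{\top}\otimes\bbI_{n})\vec(\differential\bbX)$, and by the footnote definition of the Kronecker sum the bracket equals $\bbX^{\top}\oplus\bbX$. (Note $\bbX\in\mathbb{M}_{n}$ need not be symmetric, which is why the transpose appears.)

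The substantive case is (ii). I would start from the classical representation $\log\bbX=\int_{0}^{\infty}\big((1+\tau)^{-1}\bbI_{n}-(\bbX+\tau\bbI_{n})^{-1}\big)\,\differential\tau$, valid for $\bbX\in\mathbb{S}_{n}^{+}$ (the integrand is bounded as $\tau\downarrow 0$ since $\bbX\succ\bm{0}$, and $O(\tau^{-2})$ as $\tau\to\infty$, so the integral converges; for scalars it is the familiar $\log x=\int_{0}^{\infty}((1+\tau)^{-1}-(x+\tau)^{-1})\,\differential\tau$, and the matrix case follows by functional calculus). Differentiating under the integral sign and using $\differential(\bbY^{-1})=-\bbY^{-1}(\differential\bbY)\bbY^{-1}$ with $\bbY=\bbX+\tau\bbI_{n}$, $\differential\bbY=\differential\bbX$, gives $\differential(\log\bbX)=\int_{0}^{\infty}(\bbX+\tau\bbI_{n})^{-1}\,\differential\bbX\,(\bbX+\tau\bbI_{n})^{-1}\,\differential\tau$. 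Vectorizing the integrand via (\ref{veckronprodidentity}), and using that $\bbX+\tau\bbI_{n}$ is symmetric so its inverse equals its transpose, produces $\big((\bbX+\tau\bbI_{n})^{-1}\otimes(\bbX+\tau\bbI_{n})^{-1}\big)\vec(\differential\bbX)$; fact~(2) rewrites the Kronecker product of the inverses as $\big((\bbX+\tau\bbI_{n})\otimes(\bbX+\tau\bbI_{n})\big)^{-1}$, and pulling the $\tau$-integral outside (legitimate by linearity of $\vec$ and of the Jacobian) gives the stated formula.

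The only point requiring genuine care is the interchange of $\differential(\cdot)$ and $\int_{0}^{\infty}(\cdot)\,\differential\tau$ in (ii). This is justified because the differentiated integrand $(\bbX+\tau\bbI_{n})^{-1}\,\differential\bbX\,(\bbX+\tau\bbI_{n})^{-1}$ is bounded near $\tau=0$ and $O(\tau^{-2})$ as $\tau\to\infty$, uniformly for $\bbX$ ranging over a neighborhood of any fixed positive definite matrix, so differentiation under the integral sign applies. I expect this mild analytic check to be the main obstacle; everything else reduces to routine bookkeeping with (\ref{veckronprodidentity}), fact~(2), and the Kronecker-sum definition.
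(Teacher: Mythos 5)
Your proposal is correct, and for parts (i) and (iii) it is essentially the paper's argument: expand to first order, vectorize with (\ref{veckronprodidentity}), and read off the Jacobian (the paper simply cites \cite[Appendix A, Lemma 3]{HalderWendelACC2016} for (iii) rather than writing out the one-line computation). The only genuine divergence is in (ii): the paper takes the directional-derivative formula
$\differential\bbG_{\bbZ}(\bbX)=\int_{0}^{\infty}(\bbX+\tau\bbI_{n})^{-1}\bbZ(\bbX+\tau\bbI_{n})^{-1}\differential\tau$
as a known fact, citing \cite[Section 3.1]{Ruskai2005} and \cite[Appendix I.B]{TryphonTIT2006}, and then performs the same vectorization you do. You instead derive that formula from scratch, starting from the classical integral representation $\log\bbX=\int_{0}^{\infty}\left((1+\tau)^{-1}\bbI_{n}-(\bbX+\tau\bbI_{n})^{-1}\right)\differential\tau$ and differentiating under the integral sign via $\differential(\bbY^{-1})=-\bbY^{-1}(\differential\bbY)\bbY^{-1}$, with the decay estimates justifying the interchange. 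This buys self-containedness at the cost of the (mild) analytic check you correctly flag; the paper's route outsources exactly that check to the cited references. Both are sound, and the subsequent steps --- symmetry of $\bbX+\tau\bbI_{n}$, fact (2) converting the Kronecker product of inverses into the inverse of the Kronecker product, and linearity of $\vec(\cdot)$ to pull the integral outside --- coincide.
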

\begin{proof}
(i) Taking the differential operator $\differential(\cdot)$ to both sides of $\bbF(\bbX) = \bbX\bbX$ results $\differential\bbF(\bbX) = (\differential\bbX)\bbX + \bbX(\differential\bbX)$. Applying $\vec(\cdot)$ to both sides of this resulting expression, using (\ref{veckronprodidentity}), and noticing that $\differential\vec(\bbF(\bbX))= \Jacobian\bbF(\bbX)\differential\vec(\bbX)$, we arrive at $\Jacobian\bbF(\bbX) = \bbX^{\top} \oplus \bbX$ as claimed. 
\\
(ii) The directional derivative of $\bbG(\bbX)$ in the direction $\bbZ\in\mathcal{T}_{\bbX}\mathbb{S}_{n}^{+} = \mathbb{S}_{n}$, denoted as $\differential\bbG_{\bbZ}(\bbX)$, is given by (see \cite[Section 3.1]{Ruskai2005}, also \cite[Appendix I.B]{TryphonTIT2006})
\begin{align}
\differential\bbG_{\bbZ}(\bbX) &:= \displaystyle\lim_{h\rightarrow 0} \displaystyle\frac{\log\left(\bbX + h\bbZ\right) - \log\bbX}{h}\nonumber\\
&= \displaystyle\int_{0}^{\infty}(\bbX + \tau\bbI_{n})^{-1}\bbZ(\bbX + \tau\bbI_{n})^{-1}\differential\tau.  
\label{DirectionalDerivativeOfGXindirectionZ}	
\end{align}
Recall that
\begin{eqnarray}
\vec\left(\differential\bbG_{\bbZ}(\bbX)\right) = \Jacobian\bbG(\bbX)\vec(\bbZ) \label{DirectionalDerivative2Jacobian}.	
\end{eqnarray}
Applying $\vec(\cdot)$ to both sides of (\ref{DirectionalDerivativeOfGXindirectionZ}) yields
\begin{align}
&\vec\left(\differential\bbG_{\bbZ}(\bbX)\right) \!= \!\displaystyle\int_{0}^{\infty}\!\!\!\!\!\! \vec\left((\bbX + \tau\bbI_{n})^{-1}\bbZ(\bbX + \tau\bbI_{n})^{-1}\right) \differential\tau 	\nonumber\\
&\stackrel{\footnotesize{(\ref{veckronprodidentity})}}{=}  \int_{0}^{\infty}\!\!\left((\bbX + \tau\bbI_{n})^{-\top} \otimes (\bbX + \tau\bbI_{n})^{-1}\right)\vec(\bbZ)\:\differential\tau \nonumber\\
&= \left(\int_{0}^{\infty}\!\!((\bbX + \tau\bbI_{n}) \otimes (\bbX + \tau\bbI_{n}))^{-1}\:\differential\tau\right)\vec(\bbZ),
\label{vecAndIntegral}
\end{align} 
where the last step is due to the fact that inverse of Kronecker product equals Kronecker product of inverses (in the same order), and that $\bbX$ is symmetric. Equating (\ref{DirectionalDerivative2Jacobian}) and (\ref{vecAndIntegral}), the statement follows.
\\
(iii) See \cite[Appendix A, Lemma 3]{HalderWendelACC2016}.	
\end{proof}

\begin{lemma}\label{MatrixIntegralOfInverseLoginverse}
For $\bbX\in\mathbb{S}_{n}^{+}$, $\displaystyle\int_{0}^{\infty}\!\!\!\!(\bbX + \tau\bbI_{n})^{-1}\left(\log\bbX\right)(\bbX + \tau\bbI_{n})^{-1}\differential\tau \allowbreak = \bbX^{-1}\log\bbX =\allowbreak (\log\bbX)\bbX^{-1}$.		
\end{lemma}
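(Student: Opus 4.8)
The plan is to exploit that $\log\bbX$ is an analytic function of $\bbX$, hence commutes with $\bbX$ and therefore with $(\bbX+\tau\bbI_{n})^{-1}$ for every $\tau\geq 0$. Using this, the integrand rewrites as $(\log\bbX)(\bbX+\tau\bbI_{n})^{-2}$ and the constant factor $\log\bbX$ can be pulled outside the integral. It then remains only to show that $\int_{0}^{\infty}(\bbX+\tau\bbI_{n})^{-2}\,\differential\tau = \bbX^{-1}$.

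For this last identity I would offer either of two short arguments. The \textbf{direct one}: since $\bbX\in\mathbb{S}_{n}^{+}$, write $\bbX = \bbV\bbLambda\bbV^{\top}$ with $\bbV$ orthogonal and $\bbLambda = \diag(\lambda_{1},\dots,\lambda_{n})$, $\lambda_{i}>0$; then $(\bbX+\tau\bbI_{n})^{-2} = \bbV(\bbLambda+\tau\bbI_{n})^{-2}\bbV^{\top}$, so the matrix integral decouples into the scalar integrals $\int_{0}^{\infty}(\lambda_{i}+\tau)^{-2}\,\differential\tau = 1/\lambda_{i}$, giving $\bbV\bbLambda^{-1}\bbV^{\top} = \bbX^{-1}$. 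The \textbf{self-contained one}, which stays inside the machinery of Lemma \ref{ThreeMatrixFuncJacobianLemma}: formula (\ref{DirectionalDerivativeOfGXindirectionZ}) with $\bbZ = \bbI_{n}$ gives $\differential\bbG_{\bbI_{n}}(\bbX) = \int_{0}^{\infty}(\bbX+\tau\bbI_{n})^{-2}\,\differential\tau$, while on the other hand $\log(\bbX + h\bbI_{n})$ and $\bbX$ commute, so the directional derivative of $\log$ at $\bbX$ in the direction $\bbI_{n}$ evaluates spectrally to $\bbX^{-1}$ (the scalar fact being $\tfrac{\differential}{\differential h}\log(x+h)|_{h=0}=x^{-1}$). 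Substituting back yields $\int_{0}^{\infty}(\bbX+\tau\bbI_{n})^{-1}(\log\bbX)(\bbX+\tau\bbI_{n})^{-1}\,\differential\tau = (\log\bbX)\bbX^{-1}$, and the remaining equality $(\log\bbX)\bbX^{-1} = \bbX^{-1}\log\bbX$ is once more just commutativity of functions of $\bbX$ (a special case of (\ref{LogProd}) with $\bbA = \bbX$).

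I do not expect a genuine obstacle here; the only point deserving a sentence is the legitimacy of passing $\log\bbX$ through the integral sign and of the termwise spectral evaluation, which is immediate since all matrices involved are simultaneously diagonalizable and the resulting nonnegative scalar integrands are integrable on $[0,\infty)$. Concretely I would first record the commutativity observation, then reduce to the scalar computation $\int_{0}^{\infty}(\lambda+\tau)^{-2}\,\differential\tau = \lambda^{-1}$, and finally note that the two orderings of the product agree.
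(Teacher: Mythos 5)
Your proof is correct and takes essentially the same route as the paper: the paper also diagonalizes $\bbX=\bbV\bbLambda\bbV^{-1}$ and reduces the claim to the scalar integral $\int_{0}^{\infty}\frac{\log\lambda_{i}}{(\lambda_{i}+\tau)^{2}}\,\differential\tau=\frac{\log\lambda_{i}}{\lambda_{i}}$, which is your computation with $\log\bbX$ kept inside the integral rather than commuted out first. The difference is purely cosmetic, so no further comment is needed.
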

\begin{proof}
Consider the spectral decomposition $\bbX = \bbV \bbLambda \bbV^{-1}$, where $\bbLambda:=\diag(\lambda_{i})$ is positive diagonal matrix. Then $\log\bbX = \bbV \left(\log\bbLambda\right) \bbV^{-1}$. Writing $\bbI_{n} = \bbV\bbV^{-1}$, the integral of interest equals 
\begin{align*}
	\bbV\diag\left(\int_{0}^{\infty}\!\!\!\!\frac{\log\lambda_{i}}{(\lambda_{i}+\tau)^{2}}\differential\tau\right)\bbV^{-1} = \bbV\diag\left(\frac{\log\lambda_{i}}{\lambda_{i}}\right)\bbV^{-1}.
\end{align*}
Hence the result.
\end{proof}

\begin{theorem}\label{PartialofFisherRaoGeodesicLength}
	For $\bbP,\bbP_{0} \in \mathbb{S}_{n}^{+}$,
	\begin{align*}
		\displaystyle\frac{\partial}{\partial\bbP}\dFR^{2}(\bbP,\bbP_{0}) = \bbP^{-1}\log\left(\bbP\bbP_{0}^{-1}\right)= \log\left(\bbP_{0}^{-1}\bbP\right)\bbP^{-1}.
	\end{align*}
\end{theorem}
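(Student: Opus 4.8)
The plan is to differentiate the closed-form expression \eqref{dFRbetweenCovMatrices} for $\dFR^{2}(\bbP,\bbP_{0})$, namely $\dFR^{2}(\bbP,\bbP_{0}) = \tfrac{1}{2}\tr\left[\left(\log\left(\bbP_{0}^{-1/2}\bbP\bbP_{0}^{-1/2}\right)\right)^{2}\right]$, with respect to $\bbP$ by the chain rule, composing the Jacobians supplied in Lemma \ref{ThreeMatrixFuncJacobianLemma}. First I would write $\dFR^{2}$ as the composition of three maps: $\bbP \mapsto \bbY := \bbP_{0}^{-1/2}\bbP\bbP_{0}^{-1/2}$ (this is $\bbH$ with $\bbA = \bbB = \bbP_{0}^{-1/2}$, Jacobian $\bbP_{0}^{-1/2}\otimes\bbP_{0}^{-1/2}$ by part (iii)), then $\bbY \mapsto \bbW := \log\bbY$ (Jacobian $\int_{0}^{\infty}((\bbY+\tau\bbI_{n})\otimes(\bbY+\tau\bbI_{n}))^{-1}\differential\tau$ by part (ii)), then $\bbW \mapsto \bbW^{2}$ (Jacobian $\bbW^{\top}\oplus\bbW = \bbW\oplus\bbW$ by part (i), using symmetry of $\bbW$), and finally the scalar map $\bbW^{2} \mapsto \tfrac{1}{2}\tr(\bbW^{2})$ whose gradient (in vectorized form) is $\tfrac{1}{2}\vec(\bbI_{n})$. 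Multiplying these together and then reshaping via the identity \eqref{veckronprodidentity} should collapse the iterated integral to a single matrix expression.

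The key simplification happens in the middle. After chaining parts (i)–(iii) and peeling off the trace, the derivative in vectorized form looks like $\tfrac{1}{2}\left(\bbP_{0}^{-1/2}\otimes\bbP_{0}^{-1/2}\right)\left(\int_{0}^{\infty}((\bbY+\tau\bbI_{n})\otimes(\bbY+\tau\bbI_{n}))^{-1}\differential\tau\right)(\bbW\oplus\bbW)\vec(\bbI_{n})$. I would first observe $(\bbW\oplus\bbW)\vec(\bbI_{n}) = \vec(\bbW\bbI_{n} + \bbI_{n}\bbW) = 2\vec(\bbW)$ by \eqref{veckronprodidentity}, which kills the leading $\tfrac12$ and the Kronecker-sum factor. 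Then, using \eqref{veckronprodidentity} inside the integral, the integral term applied to $\vec(\bbW)$ becomes $\vec\!\left(\int_{0}^{\infty}(\bbY+\tau\bbI_{n})^{-1}\bbW(\bbY+\tau\bbI_{n})^{-1}\differential\tau\right)$, and since $\bbW = \log\bbY$, Lemma \ref{MatrixIntegralOfInverseLoginverse} (applied with $\bbX = \bbY$) evaluates this exactly to $\vec(\bbY^{-1}\log\bbY) = \vec((\log\bbY)\bbY^{-1})$. So the whole thing reduces to $\left(\bbP_{0}^{-1/2}\otimes\bbP_{0}^{-1/2}\right)\vec(\bbY^{-1}\bbW) = \vec\!\left(\bbP_{0}^{-1/2}\bbY^{-1}\bbW\bbP_{0}^{-1/2}\right)$.

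It then remains to unpack $\bbY = \bbP_{0}^{-1/2}\bbP\bbP_{0}^{-1/2}$, so $\bbY^{-1} = \bbP_{0}^{1/2}\bbP^{-1}\bbP_{0}^{1/2}$ and $\bbW = \log\bbY = \log\left(\bbP_{0}^{-1/2}\bbP\bbP_{0}^{-1/2}\right)$. Substituting, $\bbP_{0}^{-1/2}\bbY^{-1}\bbW\bbP_{0}^{-1/2} = \bbP^{-1}\bbP_{0}^{1/2}\log\!\left(\bbP_{0}^{-1/2}\bbP\bbP_{0}^{-1/2}\right)\bbP_{0}^{-1/2}$, and applying the similarity-invariance \eqref{LogProd} with $\bbA = \bbP_{0}^{1/2}$ turns $\bbP_{0}^{1/2}\log\!\left(\bbP_{0}^{-1/2}\bbP\bbP_{0}^{-1/2}\right)\bbP_{0}^{-1/2}$ into $\log\!\left(\bbP_{0}^{1/2}\bbP_{0}^{-1/2}\bbP\bbP_{0}^{-1/2}\bbP_{0}^{-1/2}\right) = \log\!\left(\bbP\bbP_{0}^{-1}\right)$, giving $\bbP^{-1}\log\left(\bbP\bbP_{0}^{-1}\right)$; the second form $\log\left(\bbP_{0}^{-1}\bbP\right)\bbP^{-1}$ follows by another application of \eqref{LogProd} (conjugating by $\bbP^{-1}$) or by symmetry of $\dFR^{2}$ in a suitable sense. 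The main obstacle I anticipate is purely bookkeeping: correctly tracking transposes and the left/right placement of factors through the vectorization identities (parts (ii) and (iii) of the Jacobian lemma produce Kronecker \emph{products} in a specific order), and making sure the symmetry of $\bbY$, $\bbW$, and $\bbP_0$ is invoked at the right moments so that $\bbP_{0}^{-\top/2} = \bbP_{0}^{-1/2}$ and $\bbW^{\top} = \bbW$; once those are pinned down the computation is a short chain of the three cited lemmas plus \eqref{LogProd}.
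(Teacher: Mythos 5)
Your proposal is correct and follows essentially the same route as the paper's proof: the same three-map decomposition $\tfrac12\tr\bigl((\log(\bbP_0^{-1/2}\bbP\bbP_0^{-1/2}))^2\bigr)$, the chain rule with the three Jacobians of Lemma \ref{ThreeMatrixFuncJacobianLemma}, the identities $(\bbW\oplus\bbW)\vec(\bbI_n)=2\vec(\bbW)$ and \eqref{veckronprodidentity}, the integral evaluation via Lemma \ref{MatrixIntegralOfInverseLoginverse}, and the final similarity step \eqref{LogProd}. The only cosmetic difference is that the paper isolates $\bbP_0^{1/2}\bbY\bbP_0^{1/2}=\bbH^{-1}\log\bbH$ by pre-multiplying with $\bbP_0^{1/2}\otimes\bbP_0^{1/2}$, whereas you carry the factor $\bbP_0^{-1/2}\otimes\bbP_0^{-1/2}$ through to the end; the algebra is identical.
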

\begin{proof}
Let $\bbF(\bbX) := \bbX^{2}$, $\bbG(\bbX) := \log\bbX$, $\bbH(\bbX) := \bbP_{0}^{-1/2}\bbX\bbP_{0}^{-1/2}$, and $\bbOmega := \frac{1}{2}\bbF \circ \bbG \circ \bbH$. From (\ref{dFRbetweenCovMatrices}), it is clear that our objective is to compute $\bbY := \frac{\partial}{\partial\bbP}\tr\left(\bbOmega(\bbP)\right)$.

Notice that
{\small{\begin{align}
&\tr\left(\differential\bbOmega(\bbP)\right) \nonumber\\
&= \left(\vec(\bbI_{n})\right)^{\!\top\!\!} \vec\left(\differential\bbOmega(\bbP)\right) \nonumber\\
&= \left(\vec(\bbI_{n})\right)^{\!\top\!\!} \Jacobian\bbOmega(\bbP) \differential\vec(\bbP) \nonumber\\
&= \frac{1}{2}\left(\vec(\bbI_{n})\right)^{\!\top\!\!} \Jacobian\bbF\left(\bbG\left(\bbH(\bbP)\right)\right) \Jacobian\bbG\left(\bbH(\bbP)\right) \Jacobian\bbH(\bbP) \nonumber\\
&\qquad\qquad\qquad\qquad\qquad\qquad\qquad\qquad\qquad\qquad\differential\vec(\bbP)	\label{ChainRuleJac}
\end{align}}}
\noindent where the last step is the chain rule for Jacobians. Since
\begin{eqnarray*}
\bbP,\allowbreak\bbP_{0},\underbrace{\bbP_{0}^{-1/2}\bbP\bbP_{0}^{-1/2}}_{=\bbH(\bbP)} \in \mathbb{S}_{n}^{+},	\underbrace{\log\left(\bbP_{0}^{-1/2}\bbP\bbP_{0}^{-1/2}\right)}_{=\bbG(\bbH(\bbP))} \in \mathbb{S}_{n},
\end{eqnarray*}
therefore using Lemma \ref{ThreeMatrixFuncJacobianLemma}, we have 
\begin{align}
&\Jacobian\bbH(\bbP) = \bbP_{0}^{-1/2} \otimes \bbP_{0}^{-1/2} \in \mathbb{S}_{n}^{+}, \label{Jac1}\\
&\Jacobian\bbG\left(\bbH(\bbP)\right) = \displaystyle\int_{0}^{\infty}\!\!\left((\bbP_{0}^{-1/2}\bbP\bbP_{0}^{-1/2} + \tau\bbI_{n}) \:\otimes \right. \nonumber\\
&\qquad\qquad\qquad\left.(\bbP_{0}^{-1/2}\bbP\bbP_{0}^{-1/2} + \tau\bbI_{n})\right)^{-1}\differential\tau \in \mathbb{S}_{n}^{+}, \label{Jac2}\\
& \Jacobian\bbF\left(\bbG\left(\bbH(\bbP)\right)\right) = \log\left(\bbP_{0}^{-1/2}\bbP\bbP_{0}^{-1/2}\right) \oplus \nonumber\\
&\qquad\qquad\qquad\qquad\qquad\log\left(\bbP_{0}^{-1/2}\bbP\bbP_{0}^{-1/2}\right) \in \mathbb{S}_{n}. \label{Jac3}	
\end{align}
By virtue of the definition $\bbY := \frac{\partial}{\partial\bbP}\tr\left(\bbOmega(\bbP)\right)$, we also have
\begin{eqnarray}
\differential\tr(\bbOmega(\bbP)) = \vec\left(\bbY^{\top}\right)	\differential\vec(\bbP).
\label{Partial2differential}
\end{eqnarray}
Comparing the right hand sides of (\ref{ChainRuleJac}) and (\ref{Partial2differential}), substituting (\ref{Jac1}), (\ref{Jac2}), (\ref{Jac3}), and using the shorthand $\bbH$ for $\bbH(\bbP) = \bbP_{0}^{-1/2}\bbP\bbP_{0}^{-1/2}$, we obtain
\begin{align}
&\vec(\bbY) = \frac{1}{2}\left(\bbP_{0}^{-1/2} \otimes \bbP_{0}^{-1/2}\right) \left(\displaystyle\int_{0}^{\infty}\!\!\left((\bbH + \tau\bbI_{n}) \:\otimes\: \right.\right. \nonumber\\
&\qquad\qquad\left.\left.(\bbH + \tau\bbI_{n})\right)^{-1}\differential\tau\right)\left(\log\bbH \oplus \log\bbH\right)\vec(\bbI_{n}) \nonumber\\
&\Rightarrow \left(\bbP_{0}^{1/2} \otimes \bbP_{0}^{1/2}\right)\vec(\bbY) = \frac{1}{2}\left(\displaystyle\int_{0}^{\infty}\!\!\left((\bbH + \tau\bbI_{n}) \:\otimes\: \right.\right. \nonumber\\
&\qquad\left.\left.(\bbH + \tau\bbI_{n})\right)^{-1}\differential\tau\right)\left(\log\bbH \oplus \log\bbH\right)\vec(\bbI_{n}).
\label{vecY}	
\end{align}
Using (\ref{veckronprodidentity}), we rewrite the LHS of (\ref{vecY}) as $\vec\left(\bbP_{0}^{1/2}\bbY\bbP_{0}^{1/2}\right)$. On the other hand, observe that $\left(\log\bbH \oplus \log\bbH\right)\allowbreak\vec(\bbI_{n}) = \left(\log\bbH\allowbreak\otimes\bbI_{n} \allowbreak + \bbI_{n}\otimes\log\bbH\right)\vec(\bbI_{n})=\vec\left(2\log\bbH\right)$ (using (\ref{veckronprodidentity}) again). Thus (\ref{vecY}) simplifies to 
\begin{align}
&\vec\left(\bbP_{0}^{1/2}\bbY\bbP_{0}^{1/2}\right) \nonumber\\
&= \left(\displaystyle\int_{0}^{\infty}\!\!\!\!(\bbH + \tau\bbI_{n})^{-1}\otimes(\bbH + \tau\bbI_{n})^{-1}\differential\tau\right)\vec(\log\bbH) \label{vecYFirstStep}\\
&=  \displaystyle\int_{0}^{\infty}\!\!\!\!\left((\bbH + \tau\bbI_{n})^{-1}\otimes(\bbH + \tau\bbI_{n})^{-1}\right) \vec(\log\bbH)\differential\tau \nonumber\\
&\stackrel{\small{(2)}}{=} \displaystyle\int_{0}^{\infty}\!\!\!\!\vec\left((\bbH + \tau\bbI_{n})^{-1}\left(\log\bbH\right)(\bbH + \tau\bbI_{n})^{-1}\right)\differential\tau \nonumber\\
&= \vec\left(\displaystyle\int_{0}^{\infty}\!\!\!\!(\bbH + \tau\bbI_{n})^{-1}\left(\log\bbH\right)(\bbH + \tau\bbI_{n})^{-1}\differential\tau\right)\nonumber\\
&\stackrel{{\footnotesize{(\text{Lemma} \:\ref{MatrixIntegralOfInverseLoginverse})}}}{=} \!\vec\left(\bbH^{-1}\log\bbH\right)\! = \!\vec\left(\left(\log\bbH\right)\bbH^{-1}\right),
\label{Simplified}	
\end{align}
which yields the matrix equation 
\begin{align}
&\bbP_{0}^{1/2}\bbY\bbP_{0}^{1/2} = \bbH^{-1}\log\bbH = \left(\log\bbH\right)\bbH^{-1}\nonumber\\
\Rightarrow &\bbY = \bbP^{-1}\bbP_{0}^{1/2}\log\left(\bbP_{0}^{-1/2}\bbP\bbP_{0}^{-1/2}\right)\bbP_{0}^{-1/2}, \nonumber\\
&\quad = \bbP_{0}^{-1/2}\log\left(\bbP_{0}^{-1/2}\bbP\bbP_{0}^{-1/2}\right)\bbP_{0}^{1/2}\bbP^{-1}.
\label{FinalY}
\end{align}
Using (\ref{LogProd}) on (\ref{FinalY}), the proof is complete.
\end{proof}

\begin{remark}
Starting from (\ref{vecYFirstStep}), one can alternatively derive (\ref{FinalY}) via spectral decomposition of $\bbH$, which reveals an appealing connection with matrix logarithmic mean \cite{BhatiaLogarithmic2008}. Specifically, letting $\bbH=\bbV\bbLambda\bbV^{-1}$, we get	
{\small{\begin{align*}
&\displaystyle\int_{0}^{\infty}\!\!\!\!(\bbH + \tau\bbI_{n})^{-1}\otimes(\bbH + \tau\bbI_{n})^{-1}\differential\tau\nonumber\\
&= \left(\bbV\!\otimes\!\bbV\right)\!\!\underbrace{\left(\displaystyle\int_{0}^{\infty}\!\!\!\!(\bbLambda + \tau\bbI_{n})^{-1}\!\otimes\!(\bbLambda + \tau\bbI_{n})^{-1}\!\differential\tau\right)}_{=:\bbL}\!\!\left(\bbV\!\otimes\!\bbV\right)^{-1},
\end{align*}}}
\!\!where the $n^{2}\times n^{2}$ diagonal matrix $\bbL$ has entries $\bbL_{jj}=\displaystyle\int_{0}^{\infty}\!\!\!\!\displaystyle\frac{\differential\tau}{(\lambda_{i} + \tau)^{2}} =\displaystyle\frac{1}{\lambda_{i}}$ for $j=(i-1)n+i$, $i=1,\hdots,n$; and $\bbL_{jj}=\displaystyle\int_{0}^{\infty}\!\!\!\!\displaystyle\frac{\differential\tau}{(\lambda_{i} + \tau)(\lambda_{j} + \tau)} = \displaystyle\frac{\log \lambda_{i} - \log \lambda_{j}}{\lambda_{i} - \lambda_{j}}$ for $j = [(i-1)n+i+1, i(n+1)]$, $i=1,\hdots,n-1$. Noting that for $a,b>0$, $\displaystyle\int_{0}^{1} a^{\tau}b^{1-\tau}\differential\tau = \displaystyle\frac{a-b}{\log a - \log b}$, we can rewrite (\ref{vecYFirstStep}) as
\begin{align}
&\left(\bbV\!\otimes\!\bbV\right)\!\!\underbrace{\left(\displaystyle\int_{0}^{1}\!\!\!\!\bbLambda^{\tau}\!\otimes\!\bbLambda^{1-\tau}\!\differential\tau\right)}_{=\bbL^{-1}}\!\!\left(\bbV\!\otimes\!\bbV\right)^{-1}\vec\left(\bbP_{0}^{1/2}\bbY\bbP_{0}^{1/2}\right) \nonumber\\
&= \left(\displaystyle\int_{0}^{1}\!\!\!\!\bbH^{\tau}\!\otimes\!\bbH^{1-\tau}\!\differential\tau\right)	\vec\left(\bbP_{0}^{1/2}\bbY\bbP_{0}^{1/2}\right) =  \vec(\log\bbH),\nonumber\\
&\stackrel{\footnotesize{(\ref{veckronprodidentity})}}{\Rightarrow} \displaystyle\int_{0}^{1}\bbH^{\tau}\bbP_{0}^{1/2}\bbY\bbP_{0}^{1/2}\bbH^{1-\tau}\differential\tau = \log\bbH,
\label{LogarithmicMeanMatrixEqn}
\end{align}
providing an integral characterization of (\ref{FinalY}), i.e., the gradient in Theorem \ref{PartialofFisherRaoGeodesicLength} satisfies the matrix equation (\ref{LogarithmicMeanMatrixEqn}). Indeed, the solution of (\ref{LogarithmicMeanMatrixEqn}) is given by \cite[equations (5.4.3) and (5.4.8)]{Hiai2010} 
{\small{\begin{align*}
\bbP_{0}^{1/2}\bbY\bbP_{0}^{1/2} &= \displaystyle\int_{0}^{\infty}\!\!\!\!(\bbH + \tau\bbI_{n})^{-1}\left(\log\bbH\right)(\bbH + \tau\bbI_{n})^{-1}\differential\tau \nonumber \\
&\stackrel{{\footnotesize{(\text{Lemma} \:\ref{MatrixIntegralOfInverseLoginverse})}}}{=} \bbH^{-1}\log\bbH =\allowbreak (\log\bbH)\bbH^{-1},
\end{align*}}}
thus establishing the equivalence between (\ref{FinalY}) and (\ref{LogarithmicMeanMatrixEqn}).
\end{remark}


\end{document}